\begin{document}

\title[Frame Existence Problem]
 {Holomorphic Hilbert Bundles and the Frame Existence Problem}

%----------Author 1
\author[M. B. Asadi]{Mohammad B. Asadi}
\address{School of Mathematics, Statistics and Computer Science,
College of Science, University of Tehran, Tehran,
 Iran, and  School of Mathematics, Institute for Research in Fundamental Sciences
 (IPM), P.O. Box: 19395-5746, Tehran, Iran}

 \email{mb.asadi@khayam.ut.ac.ir}

%----------Author 2
\author[Z. Hassanpour-Yakhdani]{Zahra Hassanpour-Yakhdani}
\address{School of Mathematics, Statistics and Computer Science, Collage of Science, University of Tehran, Tehran, Iran}

\email{z.hasanpour@ut.ac.ir}

%thanks{}%
\
\subjclass[2010]{Primary 46L08; Secondary 42C15, 46L05}%
\keywords{Hilbert C*-modules,
%Frames,
frames, holomorphic bundles}
 %The research was in part supported by a grant from IPM (No)}
%\dedicatory{}

%\thanks{The first and second authors were partly supported by a grant from IPM (No. 91430215).}

%\romantoday
\maketitle
\renewcommand\proofname{Proof}
\newtheorem{thm}{Theorem}[section]

\newtheorem{lem}[thm]{Lemma}
\newtheorem{prop}[thm]{Proposition}
\newtheorem{cor}[thm]{Corollary}

\theoremstyle{definition}
\newtheorem{dfn}[thm]{Definition}
\newtheorem{remark}[thm]{Remark}
\newtheorem{ex}[thm]{Example}
\newtheorem{question}[thm]{Question}
\numberwithin{equation}{section}
\begin{abstract}
We show that if $A=K(l^2)+ \mathbb{C}I_{l^2}$, then there exists a
Hilbert $A$-module that possess no frames.
\end{abstract}

%%% ----------------------------------------------------------------------
\maketitle
%%% ----------------------------------------------------------------------
%\tableofcontents
\section{Introduction}
The classical frame theory has been generalized to the setting of
Hilbert C$^*$-modules
 by Frank and Larson \cite{frk}. They concluded from Kasparov's stabilization theorem
that every countably generated Hilbert C$^*$-module over a unital
C$^*$-algebra has a standard frame. They asked in \cite[Problem
8.1]{frk}, for which C$^*$-algebra $ A $, every Hilbert $A$-module
has a frame? In 2010, Li  solved this problem in the commutative
unital case, and characterized the unital commutative
C$^*$-algebra $ A $ that every Hilbert $A$-module admits a frame
as finite dimensional one \cite{Li}.

In general case, the conjecture is as follow:

\lq \lq \textit{Every Hilbert C$^*$-module over a C$^*$-algebra $
A $ admits a frame if and only if $ A $ is a dual algebra."}

In the commutative case, Li applied the Serre-Swan theorem. This
theorem states that there is an one to one correspondence between
finitely generated projective modules over a commutative
C$^*$-algebra C($ \Omega $) and complex vector bundles over $
\Omega $ \cite{swn}.

In \cite{elt}, Elliott and Kawamura shown that the vector space of
bounded uniformly continuous holomorphic sections of every
holomorphic uniformly continuous Hilbert bundle of dual Hopf type
over pure states
 of a C$^*$-algebra $A$ admits a unique structure of right Hilbert
$A$-module.

In this paper, we give a partially affirmative response to the
above conjecture. Indeed, we have applied the Elliott-Kawamura
approach and concluded the following result:

\lq \lq \textit{If $ H $ is a separable infinite dimensional
Hilbert space and  $ A = K(H) + \mathbb{C} I_H $, where $K(H)$ is
the C$^*$-algebra of compact operators on $H$,  then there exists
a Hilbert $A$-module that possess no frames."}

\section{Holomorphic Hilbert Bundle}

%\end{Definition}

Let $ A $ be a C$^*$-algebra, $ \hat{A} $ the spectrum of $ A $
and $ P(A) $ be the set of pure states of $ A $. In general,
$P(A)$ is not compact, in this case we consider $ P_{0}(A) = P(A)
\cup \lbrace 0 \rbrace $. However, we set $ P_{0}(A) = P(A)$, when
$ P(A) $ is compact.

For any $ \pi : A \longrightarrow B(H_{\pi}) $ in $ \hat{A} $, and
each unite vector $ h \in H_{\pi} $, the linear functional $ f $,
given by
 $ f(\cdot) = \langle \pi (\cdot) h, h \rangle $, is a pure state.
 In this case, we use the notations $\pi=[f]$ and $f=(\pi,
 e)$, where $e=  h \otimes  h$. Also, the unitary equivalence class of $ f $ (as a set) is equal
to
\begin{center}
$R_{1}(H_{\pi}) := \lbrace e \in B(H_{\pi}) :$ e is a rank one
projection$ \rbrace $.
\end{center}
$ R_{1}(H_{\pi}) $ has a natural holomorphic manifold structure
 that is independent of the chosen representative element in each equivalence class in $ P(A) $ \cite{elt}.
Therefore, we can identify $ P(A) $ as the disjoint union of
projective spaces, i. e.,
$$ P(A) = \bigcup_{\pi \in \hat{A} } \{ \pi \} \times R_1(H_{\pi}).$$
Then $ P_{0} (A) $ has a natural holomorphic manifold structure
and it has a
 natural uniform structure determined by the seminorms arising from evaluation at the elements of $A$.

In \cite{elt}, Elliott and Kawamura introduce the concept of
 uniformly continuous holomorphic Hilbert bundle of dual Hopf type over pure states
 of a C$^*$-algebra. In fact, we set
$$ \mathcal{H} = \lbrace B(H_{\pi}, K_{\pi}) e \rbrace_{(\pi \in \hat{A} \cup \{0\} , e \in R_{1}(H_{\pi}))}, $$
where $ K_{\pi} $ is a Hilbert space, for all $ \pi \in \hat{A} $.
If $ X(\mathcal{H})$, the vector space of bounded uniformly
continuous holomorphic sections of $\mathcal{H}$, exhausting
fibres then the pair $(\mathcal{H}, X(\mathcal{H}))$ is a
uniformly continuous holomorphic Hilbert bundle of dual Hopf type.
In this case, for any $S \in X(\mathcal{H}) $ and any $ \pi \in
\hat{A}$ there exists a operator $ S_{\pi} \in B(H_{\pi}, K_{\pi})
$ such that
\begin{equation*}
S((\pi, e)) =  S_{\pi} \circ e \qquad \qquad ( e \in
R_{1}(H_{\pi}))
\end{equation*}
As shown in \cite{elt}, $ X(\mathcal{H}) $ is a Hilbert
$A$-module. In fact for any $ S, T \in X(\mathcal{H}) $, the
$A$-valued inner product is defined by $ S^{*} T  $, where
$$ S^{*}(\pi, e) = e \circ S_{\pi}^{*} \in e B(K_{\pi}, H_{\pi}), \qquad  \text{ for all} ~ (\pi, e) \in P_{0}(A).$$
Since $ (S_{\pi}^{*} \circ T_{\pi})_{\pi \in \hat{A}} \in
\prod_{\pi \in \hat{A}}(B(H_{\pi}))$ is uniformly continuous, we
can consider $S^{*} T $ belongs to $ A $, by a Brown's result
\cite{brn}.

\section{ Frame Existence Problem}

\begin{thm}
Suppose that $A$ is a C$^*$-algebra, $f_0 \in P(A)$,
$\pi_0=[f_0]$, $H_{\pi_0}$ is a separable Hilbert space and $W$ is
a countable subset of $P(A)$ such that $f_0 \in \overline{W}
\setminus W$. If there exists a uniformly continuous holomorphic
Hilbert bundle of dual Hopf type
$\mathcal{H}=(B(H_{\pi},K_{\pi})e_{\pi})_{(\pi,e_{\pi})\in
P_0(A)}$ such that for any $ \pi \in [W] $, $ K_{\pi} $ is
separable and  $ K_{\pi_{0}} $ is nonseparable, then the Hilbert $
A $-module $X(\mathcal{H})$ possess no frames.

\end{thm}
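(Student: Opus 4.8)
The plan is to argue by contradiction. Suppose $X(\mathcal{H})$ has a frame $\{S^{(i)}\}_{i\in I}$ with bounds $0<C\le D$, so that $C\langle S,S\rangle\le\sum_i\langle S,S^{(i)}\rangle\langle S^{(i)},S\rangle\le D\langle S,S\rangle$ for every $S\in X(\mathcal{H})$. The first step is to read this inequality off fibrewise. Fix $\pi\in\hat A$ and a unit vector $h\in H_\pi$, put $e=h\otimes h$ and let $f=(\pi,e)$ be the associated pure state. Applying the representation $\pi$ and then the vector functional at $h$ to the frame inequality, and using $\pi(\langle S,T\rangle)=S_\pi^{*}T_\pi$ from Section~2, gives $C\|S_\pi h\|^{2}\le\sum_i\|S_\pi^{(i)*}S_\pi h\|^{2}\le D\|S_\pi h\|^{2}$. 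Since $X(\mathcal{H})$ exhausts fibres, $S_\pi h$ ranges over all of $K_\pi$ as $S$ ranges over $X(\mathcal{H})$, so for every $w\in K_\pi$ we get $C\|w\|^{2}\le\sum_i\|S_\pi^{(i)*}w\|^{2}\le D\|w\|^{2}$. Equivalently, $\Theta_\pi:=\sum_i S_\pi^{(i)}S_\pi^{(i)*}$ is a bounded positive invertible operator on $K_\pi$, and in particular the closed linear span of $\bigcup_i\operatorname{ran}S_\pi^{(i)}$ is all of $K_\pi$.

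Next I would use $W$ to cut $I$ down to a countable set. Fix $\pi\in[W]$; since $K_\pi$ is separable, choose a countable dense subset. For each fixed $w$ the bound $\sum_i\|S_\pi^{(i)*}w\|^{2}\le D\|w\|^{2}$ forces $\{i:S_\pi^{(i)*}w\ne0\}$ to be countable, so running $w$ through the dense set (and using that $S_\pi^{(i)}=0$ exactly when $S_\pi^{(i)*}$ annihilates a dense set) shows that $I_\pi:=\{i:S_\pi^{(i)}\ne0\}$ is countable. As $W$, hence $[W]$, is countable, $I':=\bigcup_{\pi\in[W]}I_\pi$ is countable. Now fix $i\notin I'$: then $S_\pi^{(i)}=0$ for every $\pi\in[W]$, so $S^{(i)}$ vanishes at every point of $W$. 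Crucially, the set of sections with this property is a right $A$-submodule of $X(\mathcal{H})$, because $(Sa)_\pi=S_\pi\,\pi(a)$; hence $S^{(i)}a$ also vanishes on $W$ for every $a\in A$, and by continuity of sections together with $f_0=(\pi_0,e_0)\in\overline{W}$ it vanishes at $f_0$. Writing $e_0=h_0\otimes h_0$, this says $S_{\pi_0}^{(i)}\pi_0(a)h_0=0$ for every $a\in A$; since $\pi_0$ is irreducible, $\pi_0(A)h_0$ is dense in $H_{\pi_0}$, and therefore $S_{\pi_0}^{(i)}=0$.

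With this in hand the contradiction is immediate. Applying the first step with $\pi=\pi_0$, the operator $\Theta_{\pi_0}$ is invertible, so the closed linear span of $\bigcup_{i\in I}\operatorname{ran}S_{\pi_0}^{(i)}$ is all of $K_{\pi_0}$. But $S_{\pi_0}^{(i)}=0$ for every $i\notin I'$, so that closed span coincides with the closed linear span of $\bigcup_{i\in I'}\operatorname{ran}S_{\pi_0}^{(i)}$. Each $\operatorname{ran}S_{\pi_0}^{(i)}$ is the image under a bounded operator of the separable space $H_{\pi_0}$, hence separable, and $I'$ is countable, so that span is separable. This contradicts the nonseparability of $K_{\pi_0}$, and hence $X(\mathcal{H})$ has no frame.

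The routine points — the convergence of the frame series in the appropriate (strict) topology so that the fibrewise manipulations are legitimate, and the fact that vanishing of a section at every point of $W$ persists at the points of $\overline{W}$ — I would check along the way. The step I expect to need the most thought is the one in the second paragraph: passing from ``$S^{(i)}$ vanishes at the single point $f_0$'' to ``$S_{\pi_0}^{(i)}$ is the zero operator on the whole fibre over $\pi_0$''. This does not follow from continuity alone when $H_{\pi_0}$ is higher-dimensional; the right move is to exploit the right $A$-module structure as above, using that $f_0$ corresponds to a cyclic vector of the irreducible representation $\pi_0$. (When $H_{\pi_0}$ is one-dimensional — the case relevant to $A=K(H)+\mathbb{C}I_H$ with $f_0$ the character vanishing on $K(H)$ — this point is trivial, since then $e_0$ is the identity of $\mathbb{C}$.)
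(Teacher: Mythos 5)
Your argument is correct, and its skeleton is the same as the paper's: read the frame inequality off fibrewise, use separability of $K_\pi$ for $\pi\in[W]$ to cut the index set down to a countable set, propagate the resulting vanishing to $f_0\in\overline W$ by continuity, and contradict nonseparability of $K_{\pi_0}$ (your invertibility of $\Theta_{\pi_0}$ and the paper's explicit unit vector orthogonal to a separable subspace are the same contradiction). Two of your steps genuinely differ, and both are improvements. First, you evaluate the vector state at $h$ on the product $\langle S,S^{(i)}\rangle\langle S^{(i)},S\rangle$ correctly, as $\|S^{(i)*}_\pi S_\pi h\|^2$ (using $\pi(\langle S,T\rangle)=S^*_\pi T_\pi$), whereas the paper's display inserts an extra $e_\pi$ between the two factors and works with the smaller quantity $|\langle S_\pi x, S_{j\pi}x\rangle|^2$. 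Second, with the correct form of the lower bound one must show $S^{(i)*}_{\pi_0}y=0$, i.e.\ $y\perp\operatorname{ran}S^{(i)}_{\pi_0}$, for \emph{every} index, while for $j\notin F$ the paper only obtains $S_{j\pi_0}(x_0)=0$, which is weaker; your use of the module action $(Sa)_\pi=S_\pi\pi(a)$ together with irreducibility of $\pi_0$ upgrades this to $S^{(i)}_{\pi_0}=0$ as an operator, which is exactly what the separability count needs, and it patches a soft spot in the paper's argument that happens to be invisible in the Section~4 application, where $\dim H_{\pi_0}=1$. One phrasing to tighten: the set of sections vanishing at each point of $W$ is \emph{not} a right $A$-submodule in general ($S_\pi e=0$ does not control $S_\pi\pi(a)e$); what you actually use, and what is true, is that the condition ``$S_\pi=0$ for all $\pi\in[W]$'' is stable under the module action, and you have already established this stronger condition for $i\notin I'$.
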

\begin{proof}
Assume that  $ \lbrace S_j \rbrace_{j \in J} $ is a frame for
$X(\mathcal{H})$. Hence, there exist positive numbers $ C, D $
such that for any section $S \in X(\mathcal{H})$ the following
inequality holds
\begin{equation*}\label{VF}
  C S^{*} S \leq \Sigma_{j\in J} S^{*} S_j S_j^{*} S \leq D
  S^{*}S.
\end{equation*}
Hence, for every  $ \pi \in \hat{A} $, $e_\pi \in R_1(H_\pi)$ and
$ S \in X(\mathcal{H})$, we have
\begin{equation*}\label{VF1}
C S^{*} S((\pi, e_{\pi})) \leq \Sigma_{j \in J} S^{*} S_j S_j^{*}
S(\pi, e_{\pi})) \leq D S^{*} S((\pi,e_{\pi})),
\end{equation*}
so,
$$ C e_{\pi} \circ S_{\pi}^{*} \circ S_{\pi} \circ e_{\pi} \leq \Sigma_{j \in J} e_{\pi}
 \circ S_{\pi}^{*} \circ S_{j\pi} \circ e_{\pi} \circ S^{*}_{j\pi} \circ S_{\pi} \circ e_{\pi} \leq D e_{\pi} \circ S_{\pi}^{*} \circ S_{\pi} \circ e_{\pi}.$$

In particular, for any nonzero element $ x_\pi \in H_{\pi} $ we
have
$$ C \Vert S_{\pi}(x_{\pi}) \Vert^{2} \leq \Sigma_{j \in J} \mid
 \langle S_{\pi} (x_{\pi}), S_{j\pi}(x_{\pi})\rangle \mid^2 \leq D \Vert S_{\pi}(x_{\pi}) \Vert^{2}. $$
Since bounded holomorphic sections exhaust fibers, so for any
$y_{\pi} \in K_{\pi}$ there exists a section $S \in
X(\mathcal{H})$ such that $S_{\pi}(x_{\pi}) = y_{\pi}$. Thus
\begin{equation}\label{VF4}
 \\C \Vert y_{\pi} \Vert^{2} \leq \Sigma_{j \in J} \mid\langle y_{\pi}, S_{j\pi}(x_{\pi}) \mid^{2} \leq D \Vert y_{\pi} \Vert^{2}.
\end{equation}
According to inequality \ref{VF4}, for all $ \pi \in \hat{A} $, $
0 \neq x_{\pi} \in H_{\pi} $, $ 0 \neq y_{\pi} \in K_{\pi} $ the
following set has to be countable
 \begin{equation*}\label{VF5}
 F_{x_{\pi}, y_{\pi}} := \lbrace j \in J : \langle y_{\pi}, S_{j\pi} (x_{\pi}) \rangle \neq 0 \rbrace.
\end{equation*}

In particular, if $\pi \in [W]$  then $K_{\pi} $ is separable and
so it has a countable orthonormal basis as $ E_{\pi}$. Hence, for
each $\pi \in [W]$, the following set has to be countable
$$F_{\pi, x_{\pi}}:= \{ j \in J : S_{j\pi} (x_{\pi}) \neq 0 \}=
\bigcup_{y_{\pi} \in E_{\pi}} \{ j \in J : \langle y_{\pi},
S_{j\pi} (x_{\pi}) \rangle \neq 0 \}.$$ Consequently, if we write
$W= \{({\pi}_n, e_n) : n \in \mathbb{N} \}$, then $F= \bigcup_{n
\in \mathbb{N}}F_{{\pi}_n, x_n}$ is a countable set, where for any
$n\in \mathbb{N}$, $x_n \in H_{{\pi}_n}$ and  $e_n= x_n \otimes
x_n$. Also, we use the notation $f_0=({\pi}_0, e_0)$, where $e_0=
x_0 \otimes x_0$ for some $x_0 \in H_{{\pi}_0}$.

For each $ j \in F$, $ Im(S_{j\pi_{0}}) $ is a separable space,
since $ H_{\pi_{0}}$ is separable.
 Then, $ K_{0} = \langle \bigcup_{j \in F} Im(S_{j\pi_{0}}) \rangle $ is
 a separable subspace of the non-separable Hilbert space $K_{\pi_{0}}$,
 hence there exists a unit element $y_{\pi_{0}} \in K_{\pi_{0}}$ that is orthogonal
  to $ K_{0} $. Then for any $ j \in F $, $ S_{j\pi_{0}}^{\ast}(y_{\pi_{0}}) = 0. $

 On the other hands, for any $ j \in J \setminus F$, we have $ S_{j\pi_{0}}(x_0) = 0
 $, since $ (\pi_{0}, e_0) \in \overline{W}$ and $S_j$ is continuous. Thus for any $ j \in J $,
  we have $\langle y_{\pi_{0}} S_{j\pi_{0}}(x_0) \rangle = 0 $.
   By  (\ref{VF4}), $ y_{\pi_{0}} $ is equal to zero, that is a contradiction.
    Therefore, the Hilbert $ A $-module $ X(\mathcal{H})$ admits no frames.
\end{proof}

\section{$K(l^2) + \mathbb{C} $}
In the following, we consider $ A = K(H) + \mathbb{C} I_H $,
 where $ H $ is a separable infinite dimensional Hilbert space. Also, let $ \{ h_{n} \}_{n \in \mathbb{N}} $ be an orthonormal
 basis for $H$ and $e_n=h_n\otimes h_n$, for all $n \in \mathbb{N}$.

We recall that, $ \hat{A} = \{ \pi_{0}, \pi_{1} \}$, where $\pi_1=
id$ and $ \pi_{0}(T + \lambda I_H) = \lambda $, for every $T \in
K(H)$ and $\lambda \in \mathbb{C}$. Thus, we can consider
$$ P(A) = (\{ \pi_{1}\} \times R_{1} (H)) \cup \{ (\pi_{0}, 1) \}.$$
Note that in this case, $ P(A) $ is a compact hausdorff space and
also $ (\pi_{0}, 1) \in \overline{W} \setminus W$, where $ W =  \{
(\pi_{1}, e_{n}) : n \in \mathbb{N} \}$.

 \begin{thm}\label{thm}
There exists a uniformly continuous holomorphic vector bundle of
dual Hopf type over $ P(A)$ satisfying the conditions of Theorem
3.1.
 \end{thm}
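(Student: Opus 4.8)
The plan is to construct the required bundle by hand and then invoke Theorem 3.1. Since $\hat A=\{\pi_0,\pi_1\}$ with $H_{\pi_1}=H$, $H_{\pi_0}=\mathbb{C}$, and since $[W]=\{\pi_1\}$, I would take $K_{\pi_1}=H$ (separable, as demanded for the classes in $[W]$) and $K_{\pi_0}=\ell^2(\Gamma)$ for a fixed uncountable index set $\Gamma$, so that $K_{\pi_0}$ is non-separable. Put $\mathcal{H}=(B(H_\pi,K_\pi)e)_{(\pi,e)\in P_0(A)}$; its fibre over $(\pi_1,h\otimes h)\in\{\pi_1\}\times R_1(H)$ is $\{v\otimes h:v\in H\}\cong H$, and its fibre over $(\pi_0,1)$ is $B(\mathbb{C},\ell^2(\Gamma))\cong\ell^2(\Gamma)$. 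This $\mathcal{H}$ is of dual Hopf type by its very form, and $(\pi_0,1)\in\overline W\setminus W$ has already been observed; so it suffices to produce enough bounded uniformly continuous holomorphic sections to exhaust every fibre, for then $(\mathcal{H},X(\mathcal{H}))$ meets all the hypotheses of Theorem 3.1 with $f_0=(\pi_0,1)$, $\pi_0=[f_0]$.

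The sections are built from an uncountable supply of near-isometries. Fix an uncountable almost disjoint family $\{A_\alpha\}_{\alpha\in\Gamma}$ of infinite subsets of $\mathbb{N}$ and, for each $\alpha$, an isometry $v_\alpha\in B(H)$ carrying $\{h_n\}_{n\in\mathbb{N}}$ bijectively onto $\{h_n:n\in A_\alpha\}$; almost disjointness makes $v_\alpha^{*}v_\beta-\delta_{\alpha\beta}I_H$ of finite rank, hence compact. Let $\mathcal{A}$ be the norm closure in $B(H)$ of $\operatorname{span}\{v_\alpha:\alpha\in\Gamma\}+K(H)$. For a finite sum $R=\sum_\alpha c_\alpha v_\alpha+K$ one gets $R^{*}R-\bigl(\sum_\alpha|c_\alpha|^{2}\bigr)I_H\in K(H)$, so the rule $R\mapsto y(R):=\sum_\alpha c_\alpha\delta_\alpha\in\ell^2(\Gamma)$ is well defined, annihilates $K(H)$, obeys $\|y(R)\|=\|R\|_{\mathrm{ess}}$, and extends by continuity to all of $\mathcal{A}$. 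I would then verify that every pair $S=(S_{\pi_1},S_{\pi_0})=(R,y(R))$ with $R\in\mathcal{A}$ is a bounded uniformly continuous holomorphic section of $\mathcal{H}$: holomorphy is automatic because $S((\pi,e))=S_\pi\circ e$ already has the canonical form, boundedness is clear, the associated inner products lie in $A$ since $R^{*}R'-\langle y(R'),y(R)\rangle I_H\in K(H)$ for $R,R'\in\mathcal{A}$, and uniform continuity at $(\pi_0,1)$ holds because, as $h\to 0$ weakly, $\langle R^{*}Rh,h\rangle\to\|y(R)\|^{2}$ and $\langle v_\alpha^{*}Rh,h\rangle\to c_\alpha$, the error operators lying in $K(H)\subseteq A$ so that the convergence is uniform on the basic neighbourhoods of $(\pi_0,1)$ in $P(A)$. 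Finally these sections exhaust all fibres: over $(\pi_1,h\otimes h)$ the compact operators alone give $\{Rh:R\in\mathcal{A}\}\supseteq\{Kh:K\in K(H)\}=H$; over $(\pi_0,1)$, $\mathcal{A}/K(H)$ is a Banach space under $\|\cdot\|_{\mathrm{ess}}$ and $y$ induces a linear isometry of it into $\ell^2(\Gamma)$ with dense range $\operatorname{span}\{\delta_\alpha\}$, hence an isometry onto $\ell^2(\Gamma)$. This finishes the construction, and Theorem 3.1 then gives the conclusion.

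The step I expect to be the real obstacle is the verification of uniform continuity at the point at infinity $(\pi_0,1)$: one must make the Elliott--Kawamura notion of a uniformly continuous section precise when the fibre spaces $K_{\pi_1}=H$ and $K_{\pi_0}=\ell^2(\Gamma)$ are altogether different, and confirm that the congruences $v_\alpha^{*}v_\beta\equiv\delta_{\alpha\beta}I_H\pmod{K(H)}$ are exactly what glues the $H$-valued section data on $\{\pi_1\}\times R_1(H)$ to the $\ell^2(\Gamma)$-valued datum at infinity. The companion point requiring care is surjectivity of $y$ onto the non-separable space $\ell^2(\Gamma)$, which is precisely where one uses that $\Gamma$ is uncountable, that the Calkin algebra carries an uncountable family of isometries pairwise orthogonal modulo compacts, and that $\mathcal{A}/K(H)$ is complete.
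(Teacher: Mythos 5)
Your construction is correct and rests on the same engine as the paper's: an uncountable family of isometries of $H$ that are pairwise orthogonal modulo the compacts, obtained from an almost disjoint family of infinite subsets of $\mathbb{N}$ (your $v_\alpha$ are literally the paper's $u_f$ with $f$ the bijection $\mathbb{N}\to A_\alpha$, and Li's Lemma~2.1, which the paper quotes, is just a repackaging of such a family), each isometry tagged by a distinct basis vector of the non-separable fibre at $(\pi_0,1)$. Where you genuinely diverge is in how the section space is completed. The paper takes the bare linear span $V$ of the $S_f$, asserts that it exhausts each fibre, and then appeals to Zorn's lemma for a maximal extension; but $V$ does \emph{not} exhaust fibres as written --- over $(\pi_0,1)$ it gives only the dense subspace $\operatorname{span}\{h_f\}$ of $K_{\pi_0}$, and over $(\pi_1,e_1)$ it gives only $\operatorname{span}\{z_{f(1)}:f\in\mathcal{F}\}$, which Li's lemma does not even guarantee to be dense in $\ell_2$ --- so the exhaustion is really being delegated to the (unexplained) properties of the maximal Zorn extension. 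Your version repairs both defects explicitly: adjoining $K(H)$ exhausts the fibres over $\pi_1$ outright, and passing to the norm closure $\mathcal{A}$ makes $\mathcal{A}/K(H)$ complete, so the isometry $y$ onto the closure of $\operatorname{span}\{\delta_\alpha\}$ is surjective onto $\ell^2(\Gamma)$. Your Calkin-algebra bookkeeping ($R^*R'-\langle y(R'),y(R)\rangle I\in K(H)$) also delivers in one stroke what the paper obtains by the explicit net computation with the finite exceptional set $J$, and it shows directly that the inner products land in $A=K(H)+\mathbb{C}I$ without invoking Brown's theorem. The one point you rightly flag as needing care --- pinning down the Elliott--Kawamura notion of uniform continuity across fibres modelled on different Hilbert spaces --- is exactly the point the paper also treats only through continuity of the norm function $(\pi,e)\mapsto\|S(\pi,e)\|$ on the compact space $P(A)$, and your weak-convergence argument ($\langle R^*Rx_\alpha,x_\alpha\rangle\to\|y(R)\|^2$ because the error is compact) is the same mechanism in different clothing; so this is a shared, acknowledged soft spot rather than a gap peculiar to your argument.
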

 \begin{proof}

Li shown that in \cite[Lemma 2.1]{Li}, there exists an uncountable
set $\mathcal{F}$ of injective maps from $\mathbb{N}$ to
$\mathbb{N}$ such that for any distinct $f, g \in \mathcal{F}$,
$f(n) \neq g(n)$ for all but finitely many $n \neq N$, and $f(n)
\neq g(m)$ for all $n \neq m$.

Let $K_{\pi_{1}}=l_2$ with the standard basis $\{ z_n \}_{n \in
\mathbb{N}}$ and $K_{\pi_{0}}$ be a non-separable Hilbert space
with an orthonormal basis $\{h_ f \}_{f\in \mathcal{F}}$ indexed
by $\mathcal{F}$. For each $f \in \mathcal{F}$ consider the
isometry $ u_{f} : H \longrightarrow l_{2}$, given by
$u_f(h_n)=z_{f(n)}$ for all $n \in \mathbb{N}$. Also, we consider
$v_{f}: \mathbb{C} \longrightarrow K_{\pi_{0}}$ by $ v_{f}
(\lambda) = \lambda h_ f$.

Now, we can define $ S_{f} : P(A) \longrightarrow (\bigcup_{e \in
R_{1}(H) } B( H, l_{2}) e ) \cup (B(\mathbb{C}, K_{{\pi}_{0}}) 1)$
by
\begin{equation*}
 S_{f}((\pi, e)) =  \{
\begin{array}{ll}
  u_{f} \circ e \ \ \ \ \pi = \pi_{1} \\
  v_{f} \circ 1  \ \ \ \ \pi= {\pi}_{0}
\end{array}
\end{equation*}
Set $V=\{\sum_{i=1}^n \lambda_i S_{f_i} : n \in \mathbb{N},
\lambda_i \in \mathbb{C}, f_i \in \mathcal{F} \}$. We claim that
the function $(\pi, e)\mapsto ||S(\pi, e)||$ is continuous on
$P(A)$ for every $S \in V$.

For this, we note that if $S=\sum_{i=1}^m \lambda_i S_{f_i} \in
V$, then there is a finite subset $J$ of $\mathbb{N}$ such that
$f_i(n)\neq f_j(n)$, for all $n \in J^c$ and $i\neq j$. Hence, if
$e=x \otimes x$, for some unite element $x \in H$, then we have
\begin{equation*}\begin{split}
||S(\pi_1, e)||^2&= ||\sum_{i=1}^m \lambda_i u_{f_i}(x)||^2
=||\sum_{i=1}^m \lambda_i (\sum_{n=1}^\infty \langle x, h_n
\rangle z_{f_i(n)})||^2
\\& =||\sum_{i=1}^m \sum_{n \in J}\lambda_i \langle x, h_n
\rangle z_{f_i(n)}||^2 + \sum_{i=1}^m|| \sum_{n \in J^c}\lambda_i
\langle x, h_n \rangle z_{f_i(n)}||^2 \\ & = ||\sum_{i=1}^m
\sum_{n \in J}\lambda_i \langle x, h_n \rangle z_{f_i(n)}||^2 +
\sum_{i=1}^m |\lambda_i|^2(1-|| \sum_{n \in J} \langle x, h_n
\rangle z_{f_i(n)}||^2).
\end{split}\end{equation*}

 Now, if a net $\{(\pi_{1},
e_{\alpha})\}_{\alpha \in I}$ is convergent to $(\pi_{1}, e)$ (or
$(\pi_{0}, 1)$) and for every $\alpha \in I$,
$e_{\alpha}=x_{\alpha} \otimes x_{\alpha}$  for some unite element
$x_{\alpha} \in H$, then $|\langle x_{\alpha}, y \rangle |
\rightarrow | \langle x, y \rangle |$ (or $|\langle x_{\alpha}, y
\rangle | \rightarrow 0$), for all $y \in H$. Consequently, for
every $f \in S$ and $y_1, \cdot\cdot\cdot, y_N \in H$, we have
$$||\sum_{n =1}^N \langle x_\alpha, y_n \rangle z_{f(n)} || (=(\sum_{n =1}^N | \langle x_\alpha, y_n \rangle
|^2)^{\frac{1}{2}})\rightarrow ||\sum_{n=1}^N \langle x, y_n
\rangle z_{f(n)}||$$ $$(\text{or} ~ ||\sum_{n=1}^N \langle
x_\alpha, y_n \rangle z_{f(n)}|| \rightarrow 0).$$ Thus,
$||S(\pi_1, e_\alpha)||\rightarrow ||S(\pi_1, e)||$ (or
$||S(\pi_1, e_\alpha)||\rightarrow ||S(\pi_0, 1)||$). This proves
the claim.

Therefore, $V$ is a linear space of bounded holomorphic sections
with uniformly continuous norm and it exhausts each fibre. Now, as
mentioned in \cite{elt}, by Zorn's lemma, we can extend it to a
linear space $ X(\mathcal{H}) $ of the bounded holomorphic
sections
  with uniformly continuous norm, maximal with this property, and exhausting each fibre.
  Clearly, $ X(\mathcal{H}) $ satisfies the conditions of
Theorem 3.1.
 \end{proof}
 The following result can be obtained from Theorems 3.1 and
 4.2.
\begin{cor}
If $A=K(l^2)+ \mathbb{C}I_{l^2}$, then there exists a Hilbert
$A$-module that possess no frames.
\end{cor}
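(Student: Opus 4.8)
The plan is to obtain the corollary simply by matching the concrete algebra $A = K(l^2) + \mathbb{C}I_{l^2}$ against the hypotheses of the two preceding theorems. First I would recall the structure of $A$: since $K(l^2)$ is an essential ideal of codimension one, the spectrum is $\hat{A} = \{\pi_0,\pi_1\}$ with $\pi_1 = \mathrm{id}$ on $l^2$ and $\pi_0(T+\lambda I) = \lambda$, so that
$$P(A) = \bigl(\{\pi_1\}\times R_1(l^2)\bigr)\cup\{(\pi_0,1)\},$$
a compact Hausdorff space. Fixing an orthonormal basis $\{h_n\}$ of $l^2$ and setting $e_n = h_n\otimes h_n$, I would take $f_0 = (\pi_0,1)$ and $W = \{(\pi_1,e_n): n\in\mathbb{N}\}$. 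Then $W$ is countable, $H_{\pi_0} = \mathbb{C}$ is separable, and $f_0\in\overline{W}\setminus W$ because $h_n\to 0$ weakly, so the pure states $T+\lambda I\mapsto\langle Th_n,h_n\rangle+\lambda$ converge in the weak-$*$ topology to $T+\lambda I\mapsto\lambda$, which is the point $(\pi_0,1)$, and this point lies in $P(A)$ but not in $W$. Hence the data $f_0,\pi_0,W$ satisfy all the topological hypotheses of Theorem 3.1.

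The second step is to invoke Theorem 4.2, which produces a uniformly continuous holomorphic Hilbert bundle of dual Hopf type $\mathcal{H}$ over $P(A)$ with $K_{\pi_1}=l^2$ separable and $K_{\pi_0}$ nonseparable. Since $[W]=\{\pi_1\}$, the requirement that $K_\pi$ be separable for all $\pi\in[W]$ is met, and $K_{\pi_0}$ is nonseparable as required; thus $\mathcal{H}$ satisfies the conditions of Theorem 3.1. Applying Theorem 3.1 then yields that the Hilbert $A$-module $X(\mathcal{H})$ admits no frames, which is exactly the assertion of the corollary.

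The only genuinely delicate ingredient, already carried out inside the proof of Theorem 4.2, is checking that the sections $S_f$ ($f\in\mathcal{F}$) built from Li's uncountable almost-disjoint family of injections $\mathbb{N}\to\mathbb{N}$ span a space $V$ of bounded holomorphic sections whose norm function $(\pi,e)\mapsto\Vert S(\pi,e)\Vert$ is uniformly continuous on $P(A)$ and exhausts every fibre; the Zorn's-lemma extension of $V$ to a maximal such space is then $X(\mathcal{H})$. The heart of that verification, and the step I would expect to be the main obstacle if one were proving it from scratch, is continuity of the norm at the limit point $(\pi_0,1)$: a priori an infinite linear combination $\sum_i\lambda_i u_{f_i}(x)$ need not have norm depending continuously on $x$ as $x$ weakens to $0$, but the near-disjointness of $\mathcal{F}$ forces the squared norm to split, off a finite index set, into an orthogonal sum of terms each governed by finitely many coefficients $\langle x,h_n\rangle$ together with a remainder tending to $|\lambda_i|^2$, which is precisely what rescues continuity. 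Granting that, the corollary is immediate.
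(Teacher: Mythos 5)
Your proposal is correct and follows exactly the paper's route: the corollary is obtained by taking $f_0=(\pi_0,1)$ and $W=\{(\pi_1,e_n):n\in\mathbb{N}\}$, checking the topological hypotheses (which your weak-convergence argument for $f_0\in\overline{W}\setminus W$ handles correctly), and feeding the bundle constructed in Theorem 4.2 into Theorem 3.1. Your added remarks on why the norm continuity at $(\pi_0,1)$ is the delicate point accurately reflect the content of the paper's proof of Theorem 4.2.
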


% ------------------------------------------------------------------------

%\begin{thebibliography}{1}
%\bibitem{test} A. B. C. Test, \emph{On a Test.} J. of Testing
%\textbf{88} (2000), 100--120.
%\bibitem{latex} G. Gr\"atzer, \emph{Math into \LaTeX.} 3rd Edition,
%Birkh\"auser, 2000.
%\end{thebibliography}

% ------------------------------------------------------------------------

\begin{thebibliography}{1}
\bibitem{Asadi}
M. Amini, M. B. Asadi,  G. A. Elliott, \textit{Frames in Hilbert
C$^*$-modules and Morita equivalent C$^*$-algebras}, Glassgow
Math., to appear.

\bibitem{brn}
L. G. Brown, \textit{Complements to various Stone-Weierstrass
theorems for C$^*$-algebras and a theorem of Shultz}, Comm. Math.
Phys. 143 (1992), 405- 413.

\bibitem{Dix}
J. Dixmier, C$^*$-algebras, North-Holland Publishing Company,
1977.

\bibitem{elt}
G. A. Elliott, K. Kawamura, \textit{A Hilbert bundle
characterization of Hilbert C$^*$-modules}, Trans. Amer. Math.
Soc. 360 (2008), 4841- 4862.

\bibitem{frk}
M. Frank, D. R. Larson, \textit{Frames in Hilbert C$^*$-modules
and C$^*$-algebras}, J. Operator Theory 48 (2000), 273- 314.


\bibitem{Li}
H. Li, \textit{A Hilbert C$^*$-module admitting no frames}, Bull.
London Math. Soc. 42 (2010), 388- 394.

\bibitem{swn}
R. G. Swan, \textit{Vector bundles and projective modules}, Trans.
Amer. Math. Soc. 105 (1962), 264-277.

\end{thebibliography}
\end{document}